\documentclass[12pt]{article}

\usepackage{amssymb}
\usepackage{amsmath}
\usepackage{amsthm}
\usepackage{bm}
\usepackage{hyperref}
\usepackage{thmtools}
\usepackage{thm-restate}

\newtheorem{proposition}{Proposition}

\begin{document}

\title{\Large A geometric proof of the Brenti--Welker identity}
\author{Ognjen Papaz\thanks{Faculty of Philosophy, University of East Sarajevo, Bosnia and Herzegovina {\tt ognjen.papaz@ff.ues.rs.ba}}}
\date{}
\maketitle

\begin{abstract}
We construct a hypersimplicial subdivision of the $r$-dilation of the $i$-th hypersimplex of dimension $d$ that provides a geometric proof of the Brenti--Welker identity.
\end{abstract}

\section{Introduction and preliminaries}

In \cite{Brenti} Brenti and Welker studied, for a rational formal power series of the form
\[f(t):=\sum_{n\geq 0}a_nt^n=\frac{h(t)}{1-t}^d,\ a_n\in\mathbb C,\]
the transformation of the numerator polynomial when passing for some number $r\geq 1$ to the generating function $f^{\langle r\rangle}(t):=\sum_{n\geq 0}a_{rn}t^n=\frac{h^{\langle r\rangle}(t)}{(1-t)^d}$. One of the main results they have obtained is an identity involving Eulerian numbers.  

An Eulerian number $A(d,i)$ counts the number of permutations $\pi$ of the set $\{1,2,\ldots,d\}$ that have $i-1$ descents, the positions where $\pi(t)>\pi(t+1)$.

Denote with $\mathcal C(r,d,i)$ the set of weak compositions of $i$ into $d$ parts where no part is greater then $r$, i.e
\[\mathcal C(r,d,i)=\{\bm{v}\in\mathbb Z^d_+:v_1+v_2+\cdots+v_d=i, v_t\leq r, 1\leq t\leq d\}.\]
Let $C(r,d,i)=|\mathcal C(r,d,i)|$.

\begin{restatable}{proposition}{myMainProp}\cite[Prop. 2.3]{Brenti}{(Brenti--Welker identity)}\label{identity} Let $d,r\geq 1$. Then
\[\sum_{j=1}^dC(r-1,d+1,ir-j)A(d,j)=r^dA(d,i).\]
\end{restatable}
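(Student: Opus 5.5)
The plan is to realize both sides of the identity as normalized volumes: subdivide a dilated hypersimplex into translated hypersimplices. For $0\le k\le d+1$ I will use the $d$-dimensional hypersimplex
\[
\Delta_{d,k}=\{\bm{x}\in[0,1]^{d+1}: x_1+\cdots+x_{d+1}=k\}.
\]
Volumes are measured with respect to the lattice $\Lambda_k=\{\bm{x}\in\mathbb Z^{d+1}: \sum x_t=k\}$ in the affine hyperplane $H_k=\{\sum x_t=k\}$, normalized so that a unimodular simplex has volume $1$. The first step is the classical fact, going back to Laplace and Stanley, that for $1\le k\le d$ the normalized volume of $\Delta_{d,k}$ is $A(d,k)$. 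I would take this as known, or rederive it by projecting to the first $d$ coordinates; that projection is a lattice isomorphism $\Lambda_k\to\mathbb Z^d$ and sends $\Delta_{d,k}$ to the slab $\{\bm y\in[0,1]^d: k-1\le \sum y_t\le k\}$, whose Euclidean volume is $A(d,k)/d!$. For $k=0$ and $k=d+1$ the set $\Delta_{d,k}$ is a single point, so its volume is $0$. Consequently $r\Delta_{d,i}$ has normalized volume $r^dA(d,i)$ in $H_{ir}$, since dilating by $r$ maps $\Lambda_i$ onto $r\Lambda_i\subset\Lambda_{ir}$ and scales $d$-dimensional volume by $r^d$. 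This gives the right-hand side.

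Next I subdivide. The set $r\Delta_{d,i}=[0,r]^{d+1}\cap H_{ir}$ is the union of the unit cubes $\bm v+[0,1]^{d+1}$ with $\bm v\in\{0,\dots,r-1\}^{d+1}$, intersected with $H_{ir}$. Each piece equals $\bm v+\Delta_{d,\,ir-|\bm v|}$, where $|\bm v|=\sum v_t$, and it is nonempty only when $0\le ir-|\bm v|\le d+1$. Writing $j=ir-|\bm v|$, the pieces of positive $d$-dimensional volume are exactly those with $1\le j\le d$. For fixed $j$ these are indexed by the $\bm v\in\{0,\dots,r-1\}^{d+1}$ with $|\bm v|=ir-j$, which is precisely the set $\mathcal C(r-1,d+1,ir-j)$. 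Two distinct unit cubes meet only in a face of positive codimension. Hence their slices by $H_{ir}$ have disjoint relative interiors, or the intersection lies in a lower-dimensional set. This yields a polyhedral subdivision of $r\Delta_{d,i}$ into lattice translates of hypersimplices, with translates lying in $\Lambda_{ir}-\Lambda_j$ so that volumes are preserved.

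Finally, adding up the volumes gives
\[
r^dA(d,i)=\sum_{j=1}^d C(r-1,d+1,ir-j)\,A(d,j),
\]
which is the Brenti--Welker identity. The main care point is normalization. The volume-$A(d,j)$ statement must be relative to the same lattice structure as the volume of $r\Delta_{d,i}$. Since all the hyperplanes $H_k$ are translates of one another by integer vectors, and the projection to the first $d$ coordinates identifies every $\Lambda_k$ with $\mathbb Z^d$ at once, I would do all computations after this single projection. That turns the argument into a dissection of the dilated slab $r\{\bm y\in[0,1]^d: i-1\le\sum y_t\le i\}$ into unit-cube slabs, and makes the bookkeeping transparent. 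The only remaining check is the degenerate values $j\in\{0,d+1\}$, which contribute volume zero. This matches the summation range $1\le j\le d$ in the statement.
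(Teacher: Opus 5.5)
Your proof is correct. The pieces you obtain are exactly the paper's family $\mathcal H_{r,d,i}$: the full-dimensional slices $(\bm v+[0,1]^{d+1})\cap H_{ir}$, $\bm v\in\{0,\dots,r-1\}^{d+1}$, are precisely the translates $\bm v+\Delta_{d,j}$ with $j\in[d]$ and $\bm v\in\mathcal C(r-1,d+1,ir-j)$. The closing volume count is also the same.

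What differs is how you justify the subdivision. The paper works inside the hyperplane. From a membership criterion and an explicit intersection formula, it shows that all lattice translates $\bm v+\Delta_{d,j}$ with $\sum_t v_t=i-j$ tile the hyperplane face-to-face. It then proves covering of $r\Delta_{d,i}$ point by point, subtracting a set $S$ from $\bm x$ or a set $P$ from $\lfloor\bm x\rfloor$.

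You get covering and non-overlap together from the unit-cube tiling of $[0,r]^{d+1}$ and the identity $r\Delta_{d,i}=[0,r]^{d+1}\cap H_{ir}$. This is shorter, and it avoids a delicate point in the paper's covering step. There $S$ (resp.\ $P$) cannot be arbitrary: it must contain every $t$ with $x_t=r$, since otherwise the translation vector has an entry equal to $r$.

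The paper's route, in exchange, shows that any two pieces meet in a common face, which it needs for the dual-graph description. You only record measure-theoretic non-overlap, which is enough for the identity. The face-to-face property also follows from your picture, since a common face $F$ of two cubes gives the common face $F\cap H_{ir}$ of the two slices.

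The one step you should state explicitly is why such an $F\cap H_{ir}$ has dimension at most $d-1$. The reason is that $F$ lies in a coordinate hyperplane $x_t=c$, which is transverse to $H_{ir}$.
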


A famous result from Laplace \cite{Laplace} states that the normalized volume of the $i$-th hypersimplex of dimension $d$ is the Eulerian number $A(d,i)$. 

Recall that $i$-th hypersimplex of dimension $d$, denoted with $\Delta_{d,i}$, is the convex hull of the characteristic vectors of the subsets of $[d+1]$ that have $i$ elements. In short
\[\Delta_{d,i}=\mathrm{conv}\left\{\bm e_T:T\in\binom{[d+1]}{i}\right\}.\]
We have that 
\[\Delta_{d,i}=\{\bm{x}\in\mathbb R^{d+1}:x_1+x_2+\cdots+x_{d+1}=i, 0\leq x_t\leq 1,\ \text{for}\ t\in[d+1]\}.\]

The first triangulation of the hypersimplex that showed that the normalized volume of $\Delta_{d,i}$ is $A(d,i)$ was constructed by Stanley \cite{Stanley}. This triangulation is referred to as the alcoved triangulation. Standard references for the alcoved triangulations of polytopes are \cite{Lam} and \cite{Lam2}.

In \cite{Valencia} Valencia demonstrated a combinatorial proof of Proposition \ref{identity}  by considering the alcoved triangulation of the $r$-dilation of the hypersimplex $\Delta_{d,i}$. He established a bijective correspondence between the sets 
\[[r]^d\times \mathcal U(d,i)\ \text{and}\ \bigcup_{j=1}^d\mathcal C(r-1,d+1,ir-j)\times\mathcal U(d,j)\]
and the set of alcoves in that triangulation. (Here $\mathcal U(d,j)$ denotes the set of permutations of $[d]$ with $j-1$ descents.)
 
We take a more direct approach, avoiding the alcoved triangulation, by constructing a hypersimplicial decomposition of $r$-dilation of $\Delta_{d,i}$ that is parametrized with $\bigcup_{j\in[d]}\mathcal C(r-1,d+1,ir-j)$. More precisely, we prove the following theorem.

\begin{restatable}{theorem}{myMainThm}\label{subdivision} The family of (translates of) hypersimplices
\[\mathcal H_{r,d,i}=\bigcup_{j\in[d]}\{\bm v+\Delta_{d,j}:\bm v\in\mathcal C_{r-1,d+1,ir-j}\}.\]
is a subdivision of $r\Delta_{d,i}$, the $r$-dilation of the hypersimplex $\Delta_{d,i}$.
\end{restatable}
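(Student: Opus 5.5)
The plan is to show three things in turn. First, every translate $\bm v+\Delta_{d,j}$ in $\mathcal H_{r,d,i}$ lies in $r\Delta_{d,i}$. Second, the translates cover $r\Delta_{d,i}$. Third, distinct translates meet only along common faces. The approach is through floor functions, i.e.\ the standard "box" subdivision of $\mathbb R^{d+1}$ by the hyperplanes $x_t\in\mathbb Z$, intersected with the hyperplane $x_1+\cdots+x_{d+1}=ir$.

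For containment, take $\bm x=\bm v+\bm y$ with $\bm y\in\Delta_{d,j}$. Then $\sum x_t=(ir-j)+j=ir$. Also $0\le v_t\le r-1$ and $0\le y_t\le 1$, so $0\le x_t\le r$. Hence $\bm x\in r\Delta_{d,i}$.

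For covering, take $\bm x\in r\Delta_{d,i}$ and consider the unit cube $\bm v+[0,1]^{d+1}$ containing it, with $v_t=\lfloor x_t\rfloor$. For coordinates with $x_t=r$ we set $v_t=r-1$, so that every $v_t\le r-1$. Put $\bm y=\bm x-\bm v\in[0,1]^{d+1}$ and $j=\sum y_t=ir-\sum v_t$; this $j$ is an integer. The degenerate values must be handled.
\begin{itemize}
\item If $j=0$, then $\bm y=0$, and $\bm x$ is a lattice point of the cube. We instead lower some $v_t>0$ by one, making $y_t=1$ and $j=1$.
\item If $j=d+1$, then $\bm y=\bm 1$, and we raise some $v_t<r-1$ by one.
\end{itemize}
Such a coordinate exists in each case because $1\le i\le d$ (for $d\ge1$). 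Then $\bm y\in\Delta_{d,j}$ with $j\in[d]$ and $\bm v\in\mathcal C(r-1,d+1,ir-j)$, as required.

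For the intersection property, the key fact is that each $\bm v+\Delta_{d,j}$ is exactly the intersection of the cube $\bm v+[0,1]^{d+1}$ with the hyperplane $\sum x_t=ir$. The cubes $\bm v+[0,1]^{d+1}$ over $\bm v\in\mathbb Z^{d+1}$ form a polyhedral complex: any two meet in a common face, namely a lower-dimensional cube. Intersecting a polyhedral complex with an affine hyperplane gives a polyhedral complex. So any two cells $(\bm v+[0,1]^{d+1})\cap H$ and $(\bm v'+[0,1]^{d+1})\cap H$ meet in $F\cap H$, where $F$ is the common face of the two cubes. Moreover, $F\cap H$ is a face of each cell, being cut out by the same supporting hyperplanes. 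The cells are full-dimensional ($d$-dimensional) precisely when $1\le j\le d$, and distinct $\bm v$ give distinct cells. Hence the family is a subdivision of $r\Delta_{d,i}$.

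The main obstacle I anticipate is bookkeeping rather than geometry. The subdivision should consist of the $d$-dimensional cells only. The degenerate cubes, those with $j=0$ or $j=d+1$, meet $H$ in a single vertex. Each such vertex must be shown to lie in some full-dimensional cell, which the adjustment in the covering step does. I also need to confirm that no full-dimensional cell uses a $\bm v$ with some coordinate equal to $r$. This holds because $v_t=r$ and $y_t>0$ would give $x_t>r$, and any cell with $j\in[d]$ has points with all $y_t>0$. That part is routine.

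As a corollary, comparing normalized volumes via Laplace's result recovers Proposition~\ref{identity}.
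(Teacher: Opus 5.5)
Your proof is correct. Its covering step is essentially the paper's. The paper places $\bm x$ in $\lfloor\bm x\rfloor-\bm e_P+\Delta_{d,|P|+o(\bm x)}$ for a set $P$ of positive integer coordinates. Your convention (lower every coordinate with $x_t=r$, plus one positive coordinate if $j=0$) is a particular admissible choice of that set. It is in fact more careful than the paper's ``pick any subset'', which silently needs the chosen set ($S$ or $P$) to contain $\{t:x_t=r\}$. For instance, with $r=d=i=2$, $\bm x=(2,1,1)$ and $S=\{2\}$, one gets $(2,0,1)\notin\mathcal C(1,3,3)$. Your $j=d+1$ bullet is vacuous rather than justified by $i\le d$. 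Under your convention $y_t=1$ only when $x_t=r$, so $\bm y=\bm 1$ would force $i=d+1$, and then no $v_t<r-1$ would exist anyway.

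The genuine difference is how you get the face-to-face property. The paper derives it from the explicit intersection formula (\ref{intersections}) for two lattice translates, asserted with ``one can readily check''. It packages this as Proposition~\ref{triangulation}, a hypersimplicial triangulation of the whole hyperplane, so that only covering remains. You instead note $\bm v+\Delta_{d,j}=(\bm v+[0,1]^{d+1})\cap H$ and inherit face-to-face intersections from the unit-cube tiling of $\mathbb R^{d+1}$, since slicing a polyhedral complex by a hyperplane yields a polyhedral complex. Your route is shorter and explains (\ref{intersections}) conceptually: it is just the common face of two cubes cut by $H$. The paper's formula records the exact shape of each intersection, which it reuses for the dual graph. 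That also follows from your picture: for two cells of the family, $F\cap H$ is $(d-1)$-dimensional exactly when the common cube face $F$ is a facet, i.e.\ $\bm u-\bm v=\pm\bm e_t$.
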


A polytopal subdivision of a polytope $P$ is a collection of non-overlapping polytopes whose union is $P$.

Proposition \ref{identity} follows directly from Theorem \ref{subdivision} because the volume of $r\Delta_{d,i}$ is equal $r^dA(d,i)$ and the sum of the volumes of hypersimplices from $\mathcal H_{r,d,i}$ is equal $\sum_{j=1}^dC(r-1,d+1,ir-j)A(d,j)$.

\section{Results}

We consider the lattice translates of $d$-dimensional hypersimplices in $\mathbb R^{d+1}$, i.e. the translates of the form $\bm v+\Delta_{d,j}$ where $\bm v\in \mathbb Z^{d+1}$ and  $j\in[d]$. 

Note that each of these translates is contained in some hyperplane $x_1+x_2+\cdots+x_{d+1}=i$ where $i\in\mathbb Z$.

Let $\bm x\in\mathbb R^{d+1}$ and $x_1+x_2+\cdots+x_{d+1}\in\mathbb Z$. It is not hard to see that $\bm x$ is contained in $\bm v+\Delta_{d,j}\ (\bm v\in\mathbb Z^{d+1},j\in[d])$ if and only if the following hold:

i) if $\{x_t\}>0$, then $v_t=\lfloor x_t\rfloor$; (Here $\{x_t\}$ and $\lfloor x_t\rfloor$ are fractional and integer part of $x_t$ respectively.)

ii) $\#\{t\in [d+1]:x_t=v_t+1\}+\#\{t\in[d+1]:\{x_t\}>0\}=j$.

Denote with $O(\bm x)$ the set of indices of coordinates of $\bm x$ that have positive fractional part, i.e. $O(\bm x)=\{t\in[d+1]:\{x_t\}>0\}$. Let $o(\bm x)=\sum_{t\in O(\bm x)}\{x_t\}$. Note that $o(\bm x)\in\mathbb Z_+$ and $1\leq o(\bm x)\leq |O(\bm x)|-1$ if $O(\bm x)\neq\emptyset$.

The subfamily of the lattice translates of $d$-dimensinal hypersimplices that contain $\bm x$ is given with 
\begin{equation*}\label{containment}
\{\lfloor \bm x \rfloor-\bm e_T+\Delta_{d,|T|+o(\bm x)}:T\subseteq [d+1]\setminus O(\bm x),|T|+o(\bm x)\in [d]\};
\end{equation*}
here $\lfloor \bm x \rfloor=(\lfloor x_1 \rfloor,\lfloor x_2 \rfloor,\ldots,\lfloor x_{d+1} \rfloor)$.

Let us now consider the intersection of two translates $\bm u+\Delta_{d,j_1}$ and $\bm v+\Delta_{d,j_2}$, where $\bm u,\bm v\in\mathbb Z^{d+1}$ and $j_1,j_2\in [d]$. Suppose that their intersection is nonempty. Then $|u_t-v_t|\leq 1$ for every $t\in[d+1]$ and $j_1+\sum_{t\in [d+1]}u_t=j_2+\sum_{t\in[d+1]}v_t$ . Denote $X_{\bm u}=\{t\in [d+1]:v_t=u_t+1\}$ and $X_{\bm v}=\{t\in[d+1]:u_t=v_t+1\}$. Note that $|X_{\bm u}|\leq j_1$ and $|X_{\bm v}|\leq j_2$ and $j_1-|X_{\bm u}|=j_2-|X_{\bm v}|$. One can readily check that 
\begin{multline}\label{intersections}
\bm u+\Delta_{d,j_1}\cap \bm v+\Delta_{d,j_2}=\\
\bm u+\bm e_{X_{\bm u}}+\mathrm{conv}\left\{\bm e_T:T\in\binom{[d+1]\setminus(X_{\bm u}\cup X_{\bm v})}{j_1-|X_{\bm u}|}\right\}=\\
\bm v+\bm e_{X_{\bm v}}+\mathrm{conv}\left\{\bm e_T:T\in\binom{[d+1]\setminus(X_{\bm u}\cup X_{\bm v})}{j_2-|X_{\bm v}|}\right\}
\end{multline}

From here we can see that the intersection of the hypersimplices $\bm u+\Delta_{d,j_1}$ and $\bm v+\Delta_{d,j_2}$ is a face of both of them. 

Based on the previous considerations, the following proposition holds.

\begin{proposition}\label{triangulation} The family of (translates of) hypersimplices
\[\{\bm v+\Delta_{d,j}:\bm v\in \mathbb Z^{d+1}, v_1+v_2+\cdots+v_{d+1}=i-j, j\in [d]\}\]
generates (hyper) triangulation of the hyperplane $x_1+x_2+\cdots+x_{d+1}=i$.
\end{proposition}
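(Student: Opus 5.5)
We need three things: the pieces lie in and cover the hyperplane $H_i=\{x_1+\cdots+x_{d+1}=i\}$, any two pieces meet in a common face, and the family is locally finite. We get each from the facts already established above.

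\emph{Containment.} A translate $\bm v+\Delta_{d,j}$ with $\sum_t v_t=i-j$ consists of points with coordinate sum $(i-j)+j=i$, so it lies in $H_i$.

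\emph{Covering.} Take any $\bm x\in H_i$. Since $\sum_t x_t=i\in\mathbb Z$, the description of the subfamily containing $\bm x$ applies. We only need one index set $T\subseteq[d+1]\setminus O(\bm x)$ with $|T|+o(\bm x)\in[d]$; the translate $\lfloor\bm x\rfloor-\bm e_T+\Delta_{d,|T|+o(\bm x)}$ then contains $\bm x$. Its base vector has coordinate sum
\[\sum_t\lfloor x_t\rfloor-|T|=i-o(\bm x)-|T|=i-j,\]
so this translate belongs to the family.

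We choose $T$ by cases.
\begin{itemize}
\item If $O(\bm x)\neq\emptyset$, then $1\le o(\bm x)\le|O(\bm x)|-1\le d$, so $T=\emptyset$ works.
\item If $O(\bm x)=\emptyset$, then $\bm x$ is a lattice point and $o(\bm x)=0$. Any $T$ with $|T|=1$ gives $j=1\in[d]$, and $[d+1]\setminus O(\bm x)=[d+1]$ is nonempty.
\end{itemize}
This is the only step where the hypothesis $j\in[d]$ needs care, and it is harmless here.

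\emph{Intersections.} Take two members $\bm u+\Delta_{d,j_1}$ and $\bm v+\Delta_{d,j_2}$ that meet. Formula (\ref{intersections}) shows their intersection is a face of each. When it is nonempty, the consistency condition $j_1+\sum_t u_t=j_2+\sum_t v_t$ holds automatically, since both sides equal $i$. If the intersection is empty, there is nothing to check. Hence the pieces intersect properly, i.e.\ face-to-face.

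\emph{Local finiteness and dimension.} If a piece meets a bounded region, then $\bm v$ lies within distance roughly $1$ of that region, and only finitely many lattice points satisfy this. So only finitely many pieces meet any bounded set. Each piece is $d$-dimensional, because $1\le j\le d$, and therefore full-dimensional in $H_i$.

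Together these give a face-to-face polyhedral subdivision of $H_i$ by hypersimplices, which is what ``(hyper) triangulation'' means here. The main obstacle is not in this argument. It is the verification of formula (\ref{intersections}) itself, which the paper asserts ("one can readily check") and which we are allowed to assume. Granting it, the proof is the bookkeeping above.
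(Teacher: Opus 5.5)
Your proof is correct and is essentially the paper's own argument. The paper says only that the proposition follows from the preceding considerations, and those are exactly your two ingredients: the translates $\lfloor \bm x\rfloor-\bm e_T+\Delta_{d,|T|+o(\bm x)}$ containing $\bm x$, which give the covering, and the intersection formula (\ref{intersections}), which gives the face-to-face property. Your explicit choice of $T$, the coordinate-sum check, and the local-finiteness and full-dimensionality remarks spell out details the paper leaves implicit.
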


\myMainThm*

\begin{proof}
Because of Proposition \ref{triangulation}, it is enough to prove that $r\Delta_{d,i}=\bigcup\mathcal H_{r,d,i}$. Obviously $\bm v+\Delta_{d,j}\subseteq r\Delta_{d,i}$ for each $\bm v\in \mathcal C_{r-1,d+1,ir-j}$ and each $j\in[d]$. Let $\bm x\in r\Delta_{d,i}$. Suppose that $O(\bm x)=\emptyset$. (Recall that $O(\bm x)=\{t\in[d+1]:\{x_t\}>0$.) Pick any subset $S$ of $[d+1]$ such that $\emptyset\neq S\neq[d+1]$ and $x_t>0$ for $t\in S$. Then, $|S|\in [d]$ and $\bm x-\bm e_S\in \mathcal C_{r-1,d+1,ir-|S|}$. Hence, $\bm x\in\bigcup\mathcal H_{r,d,i}$. Let $O(\bm x)\neq\emptyset$. Now pick any subset $P$ of $[d+1]\setminus O(\bm x)$ such that $x_t>0$ for $t\in P$. Then, $|P|+o(\bm x)\in[d]$, because $1\leq o(\bm x)\leq |O(\bm x)|-1$, and  $\lfloor \bm x \rfloor-\bm e_P\in \mathcal C_{r-1,d+1,ir-|P|-o(\bm x)}$. Again, we can conclude that $x\in\bigcup\mathcal H_{r,d,i}$.
\end{proof}

\myMainProp*

\begin{proof} The number $r^dA(d,i)$ is equal to the normalized volume of $r\Delta_{d,i}$. By Theorem \ref{subdivision}, the normalized volume of $r\Delta_{d,i}$ is equal to $\sum_{j=1}^dC(r-1,d+1,ir-j)A(d,j)$.
\end{proof}

From Proposition \ref{triangulation} it also follows that the family of the hypersimplices $\mathcal H_{r,d,i}$ generates a (hyper) triangulation of $r\Delta_{d,i}$. The dual graph of this (hyper) triangulation has a simple description.

Recall that the vertices of the dual graph of a triangulation are the maximal faces of the triangulation and two maximal faces are connected in the dual graph if their intersection is a maximal face of both of them.

The maximal faces of the hypersimplex $\Delta_{d,j}$ are given with 

\begin{align*}
&\bm e_t+\mathrm{conv}\left\{\bm e_T:T\in\binom{[d+1]\setminus\{t\}}{j-1}\right\}\ \text{and}\\
&\mathrm{conv}\left\{\bm e_T:T\in\binom{[d+1]\setminus\{t\}}{j}\right\},\ t\in[d+1].
\end{align*}

(When $j=1$ there are no maximal faces of the first kind and when $j=d$ there are no maximal faces of the second kind.)

From (\ref{intersections}) we can see that two hypersimplices $\bm u+\Delta_{d,j_1}$ and $\bm v+\Delta_{d,j_2}$ from $\mathcal H_{r,d,i}$ are connected in the dual graph of the triangulation if and only if $\bm u$ and $\bm v$ differ by 1 at exactly one position, i.e. $\bm v=\bm u+\bm e_t$ or $\bm u=\bm v+\bm e_t$ for some $t\in[d+1]$. Thus, the dual graph of this triangulation is a subgraph of the lattice $\mathbb Z^{d+1}$ induced by $\mathcal C_{r-1,d+1,ir-j}$.

\end{document}